\newtheorem{theorem}{Theorem}[section]
\newtheorem{lemma}[theorem]{Lemma}
\theoremstyle{definition}
\newtheorem{definition}[theorem]{Definition}
\theoremstyle{remark}
\numberwithin{equation}{section}
\newcommand{\N}{\ensuremath{\mathbb{N}}}
\newcommand{\R}{\ensuremath{\mathbb{R}}}
\begin{document}

\title[Perfect colourings of simplices and hypercubes]{Perfect colourings 
of simplices and hypercubes in dimension four and five with few colours}

\author{Dirk Frettl\"oh}
\address{Bielefeld University, Postfach 100131, 33501 Bielefeld, Germany}
\email{dirk.frettloeh@udo.edu}
\urladdr{https://math.uni-bielefeld.de/\textasciitilde frettloe}

\subjclass[2020]{Primary 05C15, 05C50}

\date{}

\begin{abstract}
A vertex colouring of some graph is called \emph{perfect} if each vertex 
of colour $i$ has the same number $a_{ij}$ of neighbours of colour $j$. 
Here we determine all perfect colourings of the edge graphs of the hypercube
in dimensions 4 and 5 by two and three colours, respectively.
For comparison we list all perfect colourings of the edge graphs of 
the simplex in dimensions 4 and 5, respectively. 
\end{abstract}

\maketitle

\section{Introduction}

Perfect colourings of graphs are colourings with the following property:
If some vertex of colour $i$ has exactly $a_{i1}$ neighbours of colour 1, 
exactly $a_{i2}$ neighbours of colour 2 and so on, then \emph{all} vertices 
of colour $i$ have exactly $a_{i1}$ neighbours of colour 1, exactly $a_{i2}$ 
neighbours of colour 2 and so on. Figure \ref{fig:perfbsp} shows a perfect 
colouring  of the edge graph of the cube with three colours. The matrix 
$(a_{ij})_{ij}$ is the \emph{colour adjacency} of the perfect colouring.

Perfect colourings appear throughout the literature under several different 
names: equitable partitions, completely regular vertex sets, distance partitions,
association schemes, etc; and in in several contexts: algebraic graph theory, 
combinatorial designs, coding theory, finite geometry. For instance, each
distance partition of a distance regular graph is a perfect colouring, but not 
vice versa. Similarly, any subgroup   of the automorphism group of a graph $G$ 
induces a perfect colouring of $G$ by considering the vertex orbits of the subgroup 
\cite[Sec.~9.3]{GR}. However, not every perfect colouring arises from a graph 
automorphism. For a broader overview see \cite{FI,I}. 

Perfect 2-colourings of hypercube graphs were already studied in \cite{F},
with emphasis on existence in arbitrary dimension.
Some concrete perfect colourings for small graphs were constructed 
for instance in \cite{AM,AAb,AvMo,GG,M}. In \cite{DF} we generalized several
results from those papers. In particular we determined all colour adjacency
matrices of perfect colourings with 2, 3, or 4 colours for all 3-, 4-, and
5-regular graphs, and list all perfect colourings of the edge graph of the 
Platonic solids with 2, 3, or 4 colours. In the present paper we apply and 
extend the results of \cite{DF} to find all perfect colourings of the edge graphs
of the simplices in dimension four and five, and  all perfect 2-colourings 
and 3-colourings of the edge graphs of the hypercubes in dimension 
four and five. For the 
sake of brevity, let us denote the edge graph of the simplex in $\R^d$ 
by \emph{$d$-simplex}, and the edge graph of the hypercube in $\R^d$
by \emph{$d$-cube}.
\begin{figure}
\[ \includegraphics[width=50mm]{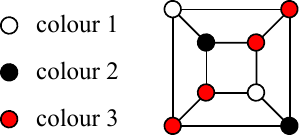} \]
\caption{A perfect colouring of the edge graph of the cube with three colours.
The corresponding colour adjacency matrix is $\Big( \begin{smallmatrix}
0 & 1 & 2\\ 1 & 0 & 2 \\ 1 & 1 & 1 \end{smallmatrix} \Big)$.  
\label{fig:perfbsp}
}
\end{figure}
\section{Preliminaries}
Throughout the paper let $G=(V,E)$ be a finite, undirected, simple, loop-free graph.
A partition of $V$ into disjoint nonempty sets $V_1, \ldots, V_m$ is called 
an \emph{$m$-colouring} of $G$. 
Note that we do not require adjacent vertices to have different colours. 
Let us recall the definition of a perfect colouring and make ist precise.
\begin{definition}
A colouring of the vertex set $V$ of some graph $G=(V,E)$ with $m$ colours
is called \emph{perfect} if (1) all colours are used, and (2) for all $i,j$ 
the number of neighbours of colours $j$ of any vertex $v$ of colour $i$ is 
a constant $a_{ij}$. The matrix $A=(a_{ij})_{1 \le i,j \le m}$ is called the
\emph{colour adjacency matrix} of the perfect colouring.
\end{definition}
Two trivial cases are $m=1$, and $m=|V|$. In the latter case the colour 
adjacency matrix equals the adjacency matrix of $G$. 

An early study of perfect colourings is \cite{Sa}, where
the colour adjacency matrix was introduced to study spectral properties of
certain graphs. In particular, the following result was shown in 
\cite[Theorem 4.5]{CDS}, see also \cite[Theorem 9.3.3]{GR}.
\begin{theorem} \label{thm:eigenv}
Let $M$ be the adjacency matrix of some graph $G$ and let $A$ be the
colour adjacency matrix of some perfect colouring of $G$. Then the
characteristic polynomial of $A$ divides the characteristic polynomial
of $M$. In particular, each eigenvalue of $A$ is an eigenvalue of $M$
(with multiplicities).
\end{theorem}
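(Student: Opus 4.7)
The plan is to encode the colouring by its $|V| \times m$ \emph{characteristic matrix} $P$, defined by $P_{v,i}=1$ if $v \in V_i$ and $P_{v,i}=0$ otherwise. Condition (2) in the definition of a perfect colouring translates directly into the matrix identity $MP = PA$: the $(v,j)$-entry on the left counts the neighbours of $v$ lying in class $V_j$, while on the right this count is $a_{ij}$ whenever $v \in V_i$, which is precisely what ``perfect'' demands. So the first step would be to write down $P$ and verify this intertwining relation cleanly.

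From $MP = PA$ I would extract two structural facts. First, since $V_1,\ldots,V_m$ are disjoint and nonempty, the columns of $P$ have disjoint nonempty supports, so $P$ has full column rank $m$. Second, writing $W := \operatorname{col}(P) \subseteq \R^{|V|}$, the computation $M(Pv) = P(Av) \in W$ shows that $W$ is $M$-invariant, and viewing $P$ as a linear isomorphism $\R^m \to W$ exhibits the restriction $M|_W$ as similar to $A$. In particular $M|_W$ and $A$ share the same characteristic polynomial.

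To finish, I would exploit an $M$-invariant complement of $W$ in order to factor $\chi_M$. Because $G$ is undirected, $M$ is symmetric, so the orthogonal complement $W^\perp$ is $M$-invariant as well; the resulting block-diagonal decomposition yields $\chi_M(t) = \chi_{M|_W}(t)\cdot \chi_{M|_{W^\perp}}(t) = \chi_A(t)\cdot \chi_{M|_{W^\perp}}(t)$, which is the required divisibility. The statement on eigenvalues with multiplicities is then immediate from this factorisation.

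The only step that requires any real care is the last one, i.e.\ producing the factorisation of $\chi_M$ associated with $W$ and a complement. Here I would use symmetry of $M$ as above; alternatively, one can avoid symmetry entirely by passing to the induced action on the quotient $\R^{|V|}/W$, which gives a block-upper-triangular representation of $M$ and hence the same factorisation for arbitrary square $M$. Everything else is just the bookkeeping that turns the definition of a perfect colouring into the identity $MP=PA$.
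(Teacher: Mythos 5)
Your proof is correct. The paper does not prove this theorem itself but cites it to \cite{CDS} and \cite{GR}, and your argument --- the characteristic matrix $P$ of the partition, the intertwining relation $MP=PA$, the $M$-invariance of $\operatorname{col}(P)$ on which $M$ restricts to a map similar to $A$, and the factorisation of $\chi_M$ via an invariant complement (or the quotient) --- is exactly the standard proof given in those references.
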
 
One main result of \cite{DF} was the following one.
\begin{theorem} \label{thm:charmat}
Suppose $A=(a_{ij}) \in \N^{m \times m}$. 
Then $A$ is a colour adjacency matrix for a perfect $m$-colouring of some graph 
$G=(V,E)$ if and only if the following hold:
\begin{enumerate}
\item \textnormal{(}Weak symmetry\textnormal{)} For all $1 \leq i, j \leq m$
holds: $a_{ij}=0$ if and only if $a_{ji}=0$.
\item \textnormal{(}Consistency\textnormal{)} For any nontrivial cycle 
$(n_{1} \, n_{2} \, \ldots \, n_{t})$ in the symmetric group $S_{m}$ on the 
set $\{1,2,\ldots,m\}$ holds:
\[a_{n_{1},n_{2}}a_{n_{2},n_{3}}\cdots a_{n_{t-1},n_{t}} a_{n_{t},n_{1}} = 
a_{n_{2},n_{1}}a_{n_{3},n_{2}}\cdots a_{n_{t},n_{t-1}} a_{n_{1},n_{t}}.\]
\end{enumerate}
Moreover, there is a connected graph $G$ with a perfect colouring 
corresponding to $A$ if and only if $A$ fulfills (1) and (2), and $A$ is 
irreducible. 
\end{theorem}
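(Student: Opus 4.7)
My plan is to prove the two directions of the biconditional separately, then handle the connectedness addendum. For \emph{necessity}, I will start from a graph $G$ with a perfect colouring having colour classes $V_1,\ldots,V_m$ and colour adjacency matrix $A$. Condition (1) is immediate: if $a_{ij}>0$ then some vertex of colour $i$ has a neighbour of colour $j$, and the corresponding edge witnesses that this vertex of colour $j$ has a neighbour of colour $i$, forcing $a_{ji}>0$. For condition (2), I will double-count edges between $V_i$ and $V_j$ to obtain the fundamental identity $|V_i|\,a_{ij} = |V_j|\,a_{ji}$. Along any cycle $(n_1,\ldots,n_t)$, the ratios $|V_{n_{k+1}}|/|V_{n_k}| = a_{n_k,n_{k+1}}/a_{n_{k+1},n_k}$ telescope around the cycle to $1$, which is exactly the stated multiplicative identity (cases in which some $a_{n_k,n_{k+1}}$ vanishes follow from (1), making both sides zero simultaneously).

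For \emph{sufficiency}, my first step will be to use condition (2) to choose positive integers $x_1,\ldots,x_m$ satisfying $x_i\,a_{ij}=x_j\,a_{ji}$ whenever $a_{ij}>0$. In each connected component of the support graph of $A$ (vertices $1,\ldots,m$, edges $\{i,j\}$ for which $a_{ij}>0$), I will fix a root, set its $x$-value to $1$, and propagate via $x_j = x_i\,a_{ij}/a_{ji}$ along a spanning tree; condition (2) guarantees that any two tree paths between the same pair of vertices yield the same value, so the assignment is consistent and can be cleared to positive integers. My second step will be to scale all $x_i$ by a common factor $N$ large enough that, on a vertex set of size $Nx_i$, I can realize a simple $a_{ii}$-regular graph and, between two vertex sets of sizes $Nx_i$ and $Nx_j$, I can realize a simple bipartite $(a_{ij},a_{ji})$-biregular graph (compatible edge counts follow from $|V_i|\,a_{ij}=|V_j|\,a_{ji}$). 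Taking the disjoint union of all these within-class and between-class graphs produces a graph $G$ whose vertex colouring by the $V_i$ is perfect with matrix $A$.

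I expect the \emph{main obstacle} to be technical rather than conceptual: after scaling I must verify that simple regular and biregular graphs with the prescribed parameters actually exist, avoiding loops, multiple edges, and parity failures (for instance $Nx_i\,a_{ii}$ must be even). These existence statements are standard, but selecting a single $N$ that makes every such subconstruction succeed simultaneously, and then gluing them into one graph, needs careful bookkeeping.

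Finally, for the connectedness addendum, a connected $G$ forces $A$ to be irreducible: any partition of $\{1,\ldots,m\}$ into sets $I,J$ with $a_{ij}=0$ for all $i\in I,\ j\in J$ would split $V$ into two parts joined by no edges. Conversely, if $A$ is irreducible then its support graph is connected, and by choosing each bipartite biregular piece above to itself be a connected biregular graph (again feasible for $N$ large enough), the union $G$ inherits connectedness from the support graph.
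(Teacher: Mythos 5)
Your proposal is correct and follows essentially the same route the paper (and its reference [DF]) indicates: necessity of (2) via the double-counting identity $|V_i|\,a_{ij}=|V_j|\,a_{ji}$ telescoped around cycles, and sufficiency via an explicit construction realizing class sizes proportional to consistent weights $x_i$ with regular and biregular pieces, plus the same irreducibility argument for connectedness. The technical points you flag (parity and size conditions for the regular/biregular subgraphs, handled by a common large scaling factor $N$) are exactly the bookkeeping the paper's cited construction carries out.
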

A symmetric matrix $M$ is called irreducible if it is not conjugate via a 
permutation matrix to a block diagonal matrix having more than one block.
(By ``block diagonal matrix'' we mean a square matrix having square matrices
on its main diagonal, and all other entries being zero.) It is well-known 
that a directed graph $G$ is connected if and only if its adjacency matrix is
irreducible. A weaker statement is true here: if a graph $G$ is connected 
then its colour adjacency matrix is irreducible. (Because one can travel
from any colour to any other colour.) The generalization of the theorem
to non-connected graphs is straight forward.

The proof of Theorem \ref{thm:charmat} is not hard. It is easy to see
that (1) and (2) are necessary conditions for adjacency matrices for perfect 
colourings. (2) is due to an elaborate double counting argument.
The sufficiency part of the proof is achieved by providing a construction 
that yields a graph with a perfect $m$-colouring for any given matrix satisfying 
the conditions of Theorem \ref{thm:charmat}.

In \cite{DF} we proceeded by using Theorem \ref{thm:charmat} 
to obtain the lists of all adjacency matrices for all perfect 
colourings of any regular graph of degree 3, 4, or 5 with two, 
three or four colours, respectively. These computations were carried
out in \texttt{sagemath} 
\cite{S}. The code and the lists are available online \cite{sws}. 
(Unfortunately, newer versions of \texttt{sagemath} seem to be unable to 
open these. Still you can see the code and the lists as plain text, 
and may import the lists by copy-and-paste into a sagemath notebook.) 
In \cite{DF} we then used Theorem \ref{thm:eigenv} to determine  those
matrices in the lists whose eigenvalues match the eigenvalues of
the edge graphs of the Platonic solids.

For the present paper we follow the same plan. We use Theorem \ref{thm:eigenv}
to determine all matrices in the lists mentioned above whether their 
eigenvalues match the spectrum of the edge graphs of
the $d$-simplex, respectively the $d$-cube. This is easily
done in \texttt{sagemath}. We end up with a list of candidates for perfect 
colourings. This is done in the next section. It remains to show the
existence, respectively non-existence, of a perfect colouring for each of the 
candidates, which is done in Sections \ref{sec:colsimplex}, \ref{sec:col4cube},
and \ref{sec:col5cube}.

\section{Colour adjacency matrices of hypercubes and simplices}
\label{sec:candidates}
As just mentioned, we extract from the respective list those matrices whose 
eigenvalues are in the spectrum of the the 4-simplex, the 5-simplex, the 4-cube, 
or the 5-cube, respectively. The eigenvalues of 
these graphs are given in Table \ref{tab:eigen}. These values
can be found for instance in \cite{CDS}. An entry $a^n$ means that
$a$ is an eigenvalue of algebraic multiplicity $n$.

\begin{table}[h]
\[ \begin{array}{l|l}
\hspace{1cm} G & \hspace{1.5cm} \mbox{eigenvalues} \\
\hline
\mbox{4-simplex} & -1^4,4 \\ 
\mbox{5-simplex} & -1^5,5 \\
\mbox{4-cube} & -4,-2^4,0^6,2^4,4 \\ 
\mbox{5-cube} & -5, -3^5, -1^{10}, 1^{10}, 3^5, 5\\
\end{array}  \]
\caption{The eigenvalues of the graphs of the $d$-simplex and the 
$d$-cube for $d\in \{4,5\}$. A superscript denotes the multiplicity of 
the respective eigenvalue. \label{tab:eigen}}
\end{table}

We used  \texttt{sagemath} to check which of the matrices 
in the corresponding list have eigenvalues in the respective spectrum of
the graph under consideration. The lists of these candidates are
given in Sections \ref{sec:colsimplex}, \ref{sec:col4cube}, and 
\ref{sec:col5cube} below. Table \ref{tab:compare} gives a summary by
providing the number of these candidates in each case.

\begin{table}[h]
\begin{center}
\begin{tabular}{c|cccc}
$m$ \textbackslash \, $G$ & 4-simplex & 5-simplex & 4-cube & 5-cube \\
\hline
2 & 2/2 & 3/3 & 5/6 & 6/9\\
3 & 2/2 & 3/3 & 5/10 & 8/16\\
4 & 1/1 & 2/2 & ?/23 & ?/57 \\
5 & 1 & 1 & 1 & ?\\
6 & 0 & 1 & 1 & ? 
\end{tabular}
\end{center}
\caption{The number of actual perfect colourings compared to the number
of possible colour adjacency matrices (that is, correct eigenvalues) 
for perfect $m$-colourings of the 4-simplex, 5-simplex, 4-cube, 5-cube, 
respectively. \label{tab:compare}}
\end{table}

The table is to be read as follows. Each entry in row 5 and 6 just
shows the number of perfect colourings. An entry in rows 1 to 4
is of the form $a/b$, where $b$ is the number of candidates and $a$ 
is the number of actual perfect colourings.
For instance, the entry 5/10 (4-cube with 3 colours) in the table tells us:
among all 64 matrices that are possible for perfect 3-colourings
of 4-regular graphs (see \cite{DF} or \cite{sws}) only 10 have all their 
eigenvalues in $\{-4,-2,0,2,4\}$. These are the candidates
for perfect 3-colourings of the 4-cube. Only 5 of them correspond
to actual perfect colourings of the 4-cube, as we will see in Section
\ref{sec:col4cube}.

\section{All perfect colourings of the 4-simplex and the 5-simplex} \label{sec:colsimplex}
By the procedure described above we obtain the following matrices
as candidates for perfect colourings of the 4-simplex and the 5-simplex. 
\begin{enumerate}
\item 2 colours:
$\left(\begin{smallmatrix}
0 & 4 \\
1 & 3
\end{smallmatrix}\right), \left(\begin{smallmatrix}
1 & 3 \\
2 & 2
\end{smallmatrix}\right)$
\item 3 colours:
$\left(\begin{smallmatrix}
0 & 1 & 3 \\
1 & 0 & 3 \\
1 & 1 & 2
\end{smallmatrix}\right), \left(\begin{smallmatrix}
0 & 2 & 2 \\
1 & 1 & 2 \\
1 & 2 & 1
\end{smallmatrix}\right)$
\item 4 colours: $ \left(\begin{smallmatrix}
0 & 1 & 1 & 2 \\
1 & 0 & 1 & 2 \\
1 & 1 & 0 & 2 \\
1 & 1 & 1 & 1
\end{smallmatrix}\right)$
\end{enumerate}

5-simplex:
\begin{enumerate}
\item 2 colours:
$\left(\begin{smallmatrix}
0 & 5 \\
1 & 4
\end{smallmatrix}\right), \left(\begin{smallmatrix}
1 & 4 \\
2 & 3
\end{smallmatrix}\right), \left(\begin{smallmatrix}
2 & 3 \\
3 & 2
\end{smallmatrix}\right)$

\item 3 colours:
$\left(\begin{smallmatrix}
0 & 1 & 4 \\
1 & 0 & 4 \\
1 & 1 & 3
\end{smallmatrix}\right), \left(\begin{smallmatrix}
0 & 2 & 3 \\
1 & 1 & 3 \\
1 & 2 & 2
\end{smallmatrix}\right), \left(\begin{smallmatrix}
1 & 2 & 2 \\
2 & 1 & 2 \\
2 & 2 & 1
\end{smallmatrix}\right)$
\item 4 colours: 
$\left(\begin{smallmatrix}
0 & 1 & 1 & 3 \\
1 & 0 & 1 & 3 \\
1 & 1 & 0 & 3 \\
1 & 1 & 1 & 2
\end{smallmatrix}\right), \left(\begin{smallmatrix}
0 & 1 & 2 & 2 \\
1 & 0 & 2 & 2 \\
1 & 1 & 1 & 2 \\
1 & 1 & 2 & 1
\end{smallmatrix}\right)$

\end{enumerate}

In principle it remains to find a perfect colouring of the simplex 
for each of these matrices. However, since the edge graph of the 
$d$-simplex is just the complete graph $K_{d+1}$, it gets much simpler. 
Because of the following result all these candidates correspond 
to perfect actual colourings. 
\begin{lemma}
 Any colouring of $K_{d+1}$ is perfect.
Any integer partition of $d+1$ into $m$ summands gives a perfect
$m$-colouring of $K_{d+1}$, and vice versa.
\end{lemma}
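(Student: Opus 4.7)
The plan is to verify the statement by a direct counting argument that exploits the fact that every pair of distinct vertices of $K_{d+1}$ is adjacent. There is no real obstacle here; the result is essentially an observation about complete graphs.

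First I would fix an arbitrary $m$-colouring of $K_{d+1}$ with colour classes $V_1, \ldots, V_m$ of sizes $n_1, \ldots, n_m$, where the $n_i$ are positive integers summing to $d+1$. Given any vertex $v \in V_i$, its $d$ neighbours are all the other vertices of the graph. Among them, exactly $n_j$ lie in $V_j$ when $j \neq i$, and exactly $n_i - 1$ lie in $V_i \setminus \{v\}$. These counts depend only on the colour pair $(i,j)$ and not on the specific vertex $v$, so the colouring is perfect with colour adjacency matrix $a_{ij} = n_j$ for $j \neq i$ and $a_{ii} = n_i - 1$. This settles the first assertion of the lemma.

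For the bijection with integer partitions, I would note that an $m$-colouring of $K_{d+1}$ is fully determined, up to relabeling of vertices and renaming of colours, by the multiset $\{n_1, \ldots, n_m\}$ of class sizes. Since each $n_i \geq 1$ and $\sum_i n_i = d+1$, this multiset is precisely an integer partition of $d+1$ into $m$ summands. Conversely, any such partition yields a colouring by partitioning the $d+1$ vertices into blocks of those sizes, and by the first part this colouring is automatically perfect.

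The only point worth noting, and the closest thing to a subtlety, is that distinct partitions give rise to distinct colour adjacency matrices (up to simultaneous row/column permutation), so the correspondence is genuinely one-to-one. This is immediate from the formula $a_{ii} = n_i - 1$: reading off the diagonal of $A$ recovers the multiset $\{n_i\}$, hence the underlying partition. Consequently, the number of equivalence classes of perfect $m$-colourings of the $d$-simplex equals the number of partitions of $d+1$ into exactly $m$ parts, which matches the enumeration of candidate matrices listed above.
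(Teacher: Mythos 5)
Your proof is correct and follows essentially the same route as the paper: both rest on the observation that in $K_{d+1}$ every vertex is adjacent to all others, so neighbour counts depend only on the colour class sizes, giving $a_{ij}=n_j$ for $i\neq j$ and $a_{ii}=n_i-1$. You merely spell out the counting and the injectivity of the partition-to-matrix correspondence more explicitly than the paper does, which is a harmless (and arguably welcome) elaboration.
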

\begin{proof}
This is immediate. Since each vertex in $K_{d+1}$ has all other vertices
as neighbours there is no further restriction imposed by the 
combinatorics of the neighbours. Indeed, any arbitrary $m$-colouring
of the vertices just corresponds to any partition of the vertex set $V$
into $m$ sets of size $a_1, \ldots, a_m$. Then $a_1+\cdots+a_m=d+1$. 
\end{proof}
For instance, the three perfect colourings of the 5-simplex with 
two colours are in one-to-one correspondence with the integer
partition of 6 into two summands: $6=5+1=4+2=3+3$, the 
three perfect colourings of the 5-simplex with 
three colours are in one-to-one correspondence with the integer
partition of 6 into three summands: $6=4+1+1=3+2+1=2+2+2$, and
the two perfect colourings of the 5-simplex with 
four colours are in one-to-one correspondence with the integer
partition of 6 into four summands: $6=3+1+1+1=2+2+1+1$.

This observation allows us to determine all other perfect colourings 
of the 4-simplex and 5-simplex with five or more colours in Table
\ref{tab:compare}. Of course there cannot be more colours than
vertices, hence Table \ref{tab:compare} contains all perfect
colourings of the 4-simplex and the 5-simplex. The only remaining 
case in the list that is not entirely trivial is to colour the 5-simplex 
(having six vertices) with five colours. This is obtained from the
integer partition $6=2+1+1+1+1$.

Having said all this all the computations in \texttt{sagemath} 
yielding the matrices above seem unnecessary. Still we carried 
them out to serve as a sanity check of the software.

\section{Perfect colourings of the 4-hypercube} \label{sec:col4cube}
Applying the analogous procedure we obtain a list of all candidates for
colour adjacency matrices for 2-, 3-, and 4-colourings of the 4-cube, 
respectively. Here they are:

\begin{enumerate}
\item 2 colours:
$\left(\begin{smallmatrix}
0 & 4 \\
2 & 2
\end{smallmatrix}\right), \left(\begin{smallmatrix}
0 & 4 \\
4 & 0
\end{smallmatrix}\right), \left(\begin{smallmatrix}
1 & 3 \\
1 & 3
\end{smallmatrix}\right), \left(\begin{smallmatrix}
1 & 3 \\
3 & 1
\end{smallmatrix}\right), \left(\begin{smallmatrix}
2 & 2 \\
2 & 2
\end{smallmatrix}\right), \left(\begin{smallmatrix}
3 & 1 \\
1 & 3
\end{smallmatrix}\right)$

\item 3 colours:
$\left(\begin{smallmatrix}
0 & 0 & 4 \\
0 & 0 & 4 \\
1 & 1 & 2
\end{smallmatrix}\right), \left(\begin{smallmatrix}
0 & 0 & 4 \\
0 & 0 & 4 \\
1 & 3 & 0
\end{smallmatrix}\right), \left(\begin{smallmatrix}
0 & 0 & 4 \\
0 & 0 & 4 \\
2 & 2 & 0
\end{smallmatrix}\right), 
\left(\begin{smallmatrix}
0 & 2 & 2 \\
2 & 0 & 2 \\
1 & 1 & 2
\end{smallmatrix}\right), \left(\begin{smallmatrix}
0 & 2 & 2 \\
2 & 0 & 2 \\
2 & 2 & 0
\end{smallmatrix}\right), \left(\begin{smallmatrix}
0 & 2 & 2 \\
2 & 1 & 1 \\
2 & 1 & 1
\end{smallmatrix}\right), \left(\begin{smallmatrix}
0 & 2 & 2 \\
2 & 2 & 0 \\
2 & 0 & 2
\end{smallmatrix}\right)\\ \left(\begin{smallmatrix}
1 & 1 & 2 \\
1 & 1 & 2 \\
1 & 1 & 2
\end{smallmatrix}\right), \left(\begin{smallmatrix}
2 & 0 & 2 \\
0 & 2 & 2 \\
1 & 1 & 2
\end{smallmatrix}\right), \left(\begin{smallmatrix}
2 & 2 & 0 \\
1 & 0 & 3 \\
0 & 2 & 2
\end{smallmatrix}\right)$
\item 4 colours:
$\left(\begin{smallmatrix}
0 & 0 & 0 & 4 \\
0 & 0 & 0 & 4 \\
0 & 0 & 0 & 4 \\
1 & 1 & 2 & 0
\end{smallmatrix}\right), \left(\begin{smallmatrix}
0 & 0 & 0 & 4 \\
0 & 0 & 4 & 0 \\
0 & 2 & 0 & 2 \\
2 & 0 & 2 & 0
\end{smallmatrix}\right), \left(\begin{smallmatrix}
0 & 0 & 1 & 3 \\
0 & 0 & 1 & 3 \\
1 & 1 & 2 & 0 \\
1 & 1 & 0 & 2
\end{smallmatrix}\right), \left(\begin{smallmatrix}
0 & 0 & 1 & 3 \\
0 & 0 & 3 & 1 \\
1 & 3 & 0 & 0 \\
3 & 1 & 0 & 0
\end{smallmatrix}\right), \left(\begin{smallmatrix}
0 & 0 & 2 & 2 \\
0 & 0 & 2 & 2 \\
1 & 1 & 0 & 2 \\
1 & 1 & 2 & 0
\end{smallmatrix}\right), \left(\begin{smallmatrix}
0 & 0 & 2 & 2 \\
0 & 0 & 2 & 2 \\
1 & 1 & 1 & 1 \\
1 & 1 & 1 & 1
\end{smallmatrix}\right),\\ \left(\begin{smallmatrix}
0 & 0 & 2 & 2 \\
0 & 0 & 2 & 2 \\
1 & 1 & 2 & 0 \\
1 & 1 & 0 & 2
\end{smallmatrix}\right), \left(\begin{smallmatrix}
0 & 0 & 2 & 2 \\
0 & 0 & 2 & 2 \\
2 & 2 & 0 & 0 \\
2 & 2 & 0 & 0
\end{smallmatrix}\right), \left(\begin{smallmatrix}
0 & 1 & 0 & 3 \\
1 & 0 & 3 & 0 \\
0 & 1 & 0 & 3 \\
1 & 0 & 3 & 0
\end{smallmatrix}\right), \left(\begin{smallmatrix}
0 & 1 & 0 & 3 \\
1 & 0 & 3 & 0 \\
0 & 1 & 2 & 1 \\
1 & 0 & 1 & 2
\end{smallmatrix}\right), \left(\begin{smallmatrix}
0 & 1 & 1 & 2 \\
1 & 0 & 2 & 1 \\
1 & 2 & 0 & 1 \\
2 & 1 & 1 & 0
\end{smallmatrix}\right), \left(\begin{smallmatrix}
0 & 1 & 1 & 2 \\
1 & 1 & 1 & 1 \\
1 & 1 & 1 & 1 \\
2 & 1 & 1 & 0
\end{smallmatrix}\right), \left(\begin{smallmatrix}
0 & 1 & 1 & 2 \\
1 & 2 & 0 & 1 \\
1 & 0 & 2 & 1 \\
2 & 1 & 1 & 0
\end{smallmatrix}\right),\\ \left(\begin{smallmatrix}
0 & 2 & 0 & 2 \\
2 & 0 & 0 & 2 \\
0 & 0 & 2 & 2 \\
1 & 1 & 2 & 0
\end{smallmatrix}\right), \left(\begin{smallmatrix}
0 & 2 & 2 & 0 \\
1 & 0 & 0 & 3 \\
1 & 0 & 0 & 3 \\
0 & 2 & 2 & 0
\end{smallmatrix}\right), \left(\begin{smallmatrix}
1 & 0 & 0 & 3 \\
0 & 1 & 3 & 0 \\
0 & 1 & 1 & 2 \\
1 & 0 & 2 & 1
\end{smallmatrix}\right), \left(\begin{smallmatrix}
1 & 0 & 1 & 2 \\
0 & 1 & 2 & 1 \\
1 & 2 & 1 & 0 \\
2 & 1 & 0 & 1
\end{smallmatrix}\right), \left(\begin{smallmatrix}
1 & 0 & 3 & 0 \\
0 & 1 & 0 & 3 \\
1 & 0 & 0 & 3 \\
0 & 1 & 1 & 2
\end{smallmatrix}\right), \left(\begin{smallmatrix}
1 & 1 & 0 & 2 \\
1 & 1 & 0 & 2 \\
0 & 0 & 2 & 2 \\
1 & 1 & 2 & 0
\end{smallmatrix}\right), \left(\begin{smallmatrix}
1 & 1 & 1 & 1 \\
1 & 1 & 1 & 1 \\
1 & 1 & 1 & 1 \\
1 & 1 & 1 & 1
\end{smallmatrix}\right),\\ \left(\begin{smallmatrix}
1 & 1 & 1 & 1 \\
1 & 1 & 1 & 1 \\
1 & 1 & 2 & 0 \\
1 & 1 & 0 & 2
\end{smallmatrix}\right), \left(\begin{smallmatrix}
2 & 0 & 0 & 2 \\
0 & 2 & 0 & 2 \\
0 & 0 & 2 & 2 \\
1 & 1 & 2 & 0
\end{smallmatrix}\right), \left(\begin{smallmatrix}
2 & 0 & 1 & 1 \\
0 & 2 & 1 & 1 \\
1 & 1 & 2 & 0 \\
1 & 1 & 0 & 2
\end{smallmatrix}\right)$
\end{enumerate}
Unlike in the previous section not all of these candidates
give rise to actual perfect colourings. 
In the following result we list all of the matrices
that  correspond to actual perfect colourings of the
4-cube  with two and three colours. We leave to check
the existence of the 23 distinct 4-colourings 
as a challenge to the reader. 
\begin{theorem} \label{thm:4cube}
All perfect 2-colourings of the 4-cube 
are the five ones corresponding to
\[ \left(\begin{smallmatrix}
0 & 4 \\
4 & 0
\end{smallmatrix}\right), \left(\begin{smallmatrix}
1 & 3 \\
1 & 3
\end{smallmatrix}\right), \left(\begin{smallmatrix}
1 & 3 \\
3 & 1
\end{smallmatrix}\right), \left(\begin{smallmatrix}
2 & 2 \\
2 & 2
\end{smallmatrix}\right), \left(\begin{smallmatrix}
3 & 1 \\
1 & 3
\end{smallmatrix}\right) \]
All perfect 3-colourings of the 4-cube 
are the five ones corresponding to
\[ \left(\begin{smallmatrix}
0 & 0 & 4 \\
0 & 0 & 4 \\
1 & 3 & 0
\end{smallmatrix}\right), \left(\begin{smallmatrix}
0 & 0 & 4 \\
0 & 0 & 4 \\
2 & 2 & 0
\end{smallmatrix}\right), 
\left(\begin{smallmatrix}
0 & 2 & 2 \\
2 & 0 & 2 \\
1 & 1 & 2
\end{smallmatrix}\right),  \left(\begin{smallmatrix}
1 & 1 & 2 \\
1 & 1 & 2 \\
1 & 1 & 2
\end{smallmatrix}\right), \left(\begin{smallmatrix}
2 & 0 & 2 \\
0 & 2 & 2 \\
1 & 1 & 2
\end{smallmatrix}\right). \]
\end{theorem}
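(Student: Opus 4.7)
The plan is to split the proof into a non-existence part (ruling out the six excluded candidates) and an existence part (exhibiting an explicit perfect colouring for each of the ten listed matrices); throughout I identify $V(Q_4)$ with $\mathbb{F}_2^4$.

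For \emph{non-existence}, I would apply the double-counting identity $|V_i|\,a_{ij}=|V_j|\,a_{ji}$ together with $\sum_i|V_i|=16$, which determines the colour class sizes, and observe that in each of the six excluded cases the forced sizes are non-integral. Specifically, $\left(\begin{smallmatrix}0&4\\2&2\end{smallmatrix}\right)$ forces $3|V_1|=16$; each of the three $3$-colour candidates whose off-diagonal entries are all $2$ forces $|V_1|=|V_2|=|V_3|$, hence $3|V_i|=16$; the matrix $\left(\begin{smallmatrix}0&0&4\\0&0&4\\1&1&2\end{smallmatrix}\right)$ forces class ratio $1{:}1{:}4$ and $\left(\begin{smallmatrix}2&2&0\\1&0&3\\0&2&2\end{smallmatrix}\right)$ forces ratio $1{:}2{:}3$, both giving $6|V_1|=16$. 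So all six candidates fail by divisibility.

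For \emph{existence}, the four balanced $2$-colour matrices $\left(\begin{smallmatrix}3&1\\1&3\end{smallmatrix}\right),\left(\begin{smallmatrix}2&2\\2&2\end{smallmatrix}\right),\left(\begin{smallmatrix}1&3\\3&1\end{smallmatrix}\right),\left(\begin{smallmatrix}0&4\\4&0\end{smallmatrix}\right)$ are realised by colouring vertices according to the parity of the sum of $k\in\{1,2,3,4\}$ coordinates, respectively. For $\left(\begin{smallmatrix}1&3\\1&3\end{smallmatrix}\right)$ (with class sizes $4$ and $12$) I would take $V_1=\{0000,0001,1110,1111\}$ and observe that the four closed $Q_4$-neighbourhoods of the vertices of $V_1$ are pairwise disjoint and jointly exhaust $V(Q_4)$, which immediately yields the claimed adjacency counts. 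The five $3$-colour matrices can be handled by refining the parity bipartition or using subcube splits: the two bipartite matrices place $V_3$ equal to the even-weight class and split the eight odd-weight vertices either as $\{0001,1110\}$ versus the remaining six (for $\left(\begin{smallmatrix}0&0&4\\0&0&4\\1&3&0\end{smallmatrix}\right)$) or by the parity of $x_1+x_2$ (for $\left(\begin{smallmatrix}0&0&4\\0&0&4\\2&2&0\end{smallmatrix}\right)$); the three $(4,4,8)$ matrices use subcube-type splits of $\mathbb{F}_2^4$, e.g.\ $V_1=\{x:x_1=x_2=0\}$, $V_2=\{x:x_1=x_2=1\}$ for $\left(\begin{smallmatrix}2&0&2\\0&2&2\\1&1&2\end{smallmatrix}\right)$, and similar variants for the other two.

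The main obstacle is to locate the correct ad hoc subsets in the $(4,12)$ case and in $\left(\begin{smallmatrix}0&2&2\\2&0&2\\1&1&2\end{smallmatrix}\right)$, where the colour classes are neither parity classes nor full subcubes; linear-functional and subcube constructions handle everything else by symmetry. Once such a subset is guessed, verification is a short neighbour-count per vertex orbit, facilitated by the reflectional/translational symmetries preserving the proposed partition.
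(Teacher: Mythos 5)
Your non-existence argument is correct and takes a genuinely different route from the paper's: the paper rules out $\left(\begin{smallmatrix}0 & 4\\ 2 & 2\end{smallmatrix}\right)$ by a local propagation argument on a drawing of the 4-cube and then eliminates the five bad 3-colour candidates by merging colours so as to reduce to that case, whereas your double count $|V_i|a_{ij}=|V_j|a_{ji}$ together with $\sum_i |V_i|=16$ disposes of all six at once ($3\nmid 16$ for four of them, $6\nmid 16$ for the other two; I checked the forced ratios and they are as you state). This is arguably cleaner; the paper states the relevant counting lemma (Lemma~\ref{lem:vercount2}) but only deploys it for the 5-cube.

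The existence half, however, has concrete gaps. First, your justification in the $(4,12)$ case is arithmetically impossible: four closed neighbourhoods in the 4-cube contain $4\cdot 5=20>16$ vertices, so they cannot be pairwise disjoint. The set $V_1=\{0000,0001,1110,1111\}$ does work, but for a different reason (it induces a perfect matching on itself and every outside vertex has exactly one neighbour in it), and that has to be verified directly. Second, the matrix $\left(\begin{smallmatrix}0 & 2 & 2\\ 2 & 0 & 2\\ 1 & 1 & 2\end{smallmatrix}\right)$, whose realisability the theorem asserts, is left with no construction at all ("once such a subset is guessed"); and the subcube-type split you gesture at for $\left(\begin{smallmatrix}1 & 1 & 2\\ 1 & 1 & 2\\ 1 & 1 & 2\end{smallmatrix}\right)$ fails as described, since a vertex of the subcube $x_1=x_2=0$ has two neighbours inside that subcube, forcing $a_{11}=2$ rather than $1$. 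Both cases are fixable — e.g.\ $V_1=\{0000,0001,1110,1111\}$, $V_2=V_1+0010$ for the latter, and $V_1=\{0000,0011,1101,1110\}$, $V_2=\{0001,0010,1100,1111\}$ for the former — but as written the proposal does not establish existence for all ten listed matrices, which is exactly what the paper accomplishes (by exhibiting the colourings in Figure~\ref{fig:4cube}).
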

\begin{proof} Perfect colourings corresponding to the colour adjacency 
matrices above are shown in Figure \ref{fig:4cube}. It remains to
show that there are no further perfect colourings of the 4-cube corresponding 
to the other candidates.

\textbf{Two colours}, matrix $\left(\begin{smallmatrix} 0 & 4 \\ 2 & 2
\end{smallmatrix}\right)$: This is not possible, as one can
easily see as follows: a white vertex (without loss of generality 
vertex 0 in this diagram)
\[ \includegraphics[width=.40\textwidth]{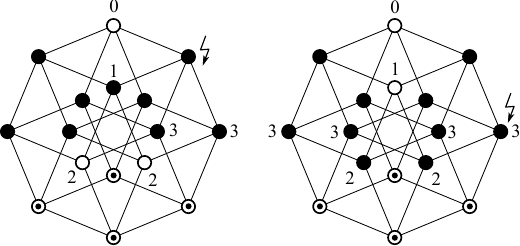} \]
has four black neighbours. If the vertex labelled 1 is black
then the two vertices labelled 2 are necessarily white. 
This in turn forces the vertices labelled 3 to be black. 
Now the black vertex labelled with a lightning bolt has three black 
neighbours. Contradiction. 

If vertex 1 is white, then the vertices labelled 2 are black.
Moreover, the topmost two black vertices already have
two white neighbours, hence all vertices labelled 3 are necessarily 
black. Now the black vertex labelled with a lightning bolt has three
black neighbours. Contradiction.

\textbf{Three colours}: we need to exclude the five matrices
\[  \left(\begin{smallmatrix}
0 & 0 & 4 \\
0 & 0 & 4 \\
1 & 1 & 2
\end{smallmatrix}\right), \left(\begin{smallmatrix}
0 & 2 & 2 \\
2 & 0 & 2 \\
2 & 2 & 0
\end{smallmatrix}\right), 
\left(\begin{smallmatrix}
0 & 2 & 2 \\
2 & 1 & 1 \\
2 & 1 & 1
\end{smallmatrix}\right),  \left(\begin{smallmatrix}
0 & 2 & 2 \\
2 & 2 & 0 \\
2 & 0 & 2
\end{smallmatrix}\right), \left(\begin{smallmatrix}
2 & 2 & 0 \\
1 & 0 & 3 \\
0 & 2 & 2
\end{smallmatrix}\right). \]
This is easy: For instance in a perfect 3-colouring corresponding
to $\left(\begin{smallmatrix}
0 & 0 & 4 \\
0 & 0 & 4 \\
1 & 1 & 2
\end{smallmatrix}\right)$ we can identify the colours number 1 and 2
(white and black). We obtain a perfect 2-colouring with colour adjacency matrix
$\left(\begin{smallmatrix}
0 & 4 \\
2 & 2
\end{smallmatrix}\right)$. But we just saw that such a perfect
2-colouring does not exist. All remaining matrices can be excluded
in this manner. Note that for the last one we need to identify colours 
1 and 3 in order to obtain $\left(\begin{smallmatrix}
2 & 2 \\
4 & 0
\end{smallmatrix}\right)$, which is just a permutation of 
$\left(\begin{smallmatrix}
0 & 4 \\
2 & 2
\end{smallmatrix}\right)$, respectively a permutation of colours, hence 
also impossible.
\end{proof}

\begin{figure}
\[ \includegraphics[width=.9\textwidth]{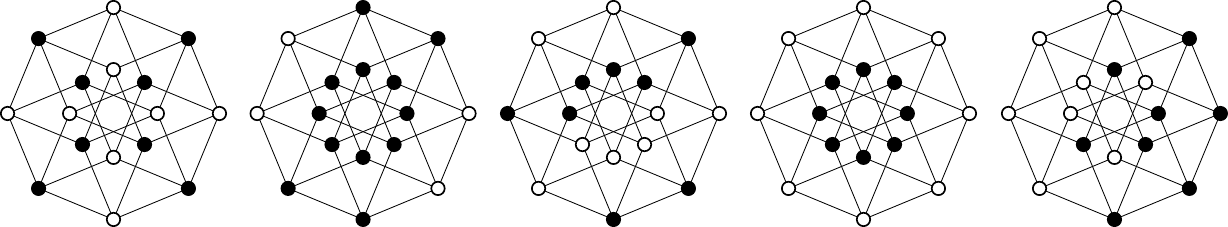} \]
\[ \includegraphics[width=.9\textwidth]{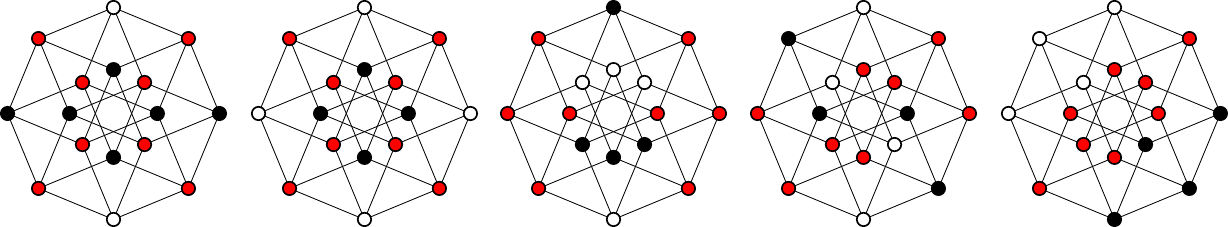} \]
\caption{The perfect colourings of the 4-cube with two and three colours.
\label{fig:4cube}}
\end{figure}

\section{Perfect colourings of the 5-cube} \label{sec:col5cube}
Applying the analogous procedure we obtain a list of all candidates
for colour adjacency matrices for 2-, 3-, and 4-colourings of the 5-cube,
respectively. 
\begin{enumerate}
\item 2 colours
$\left(\begin{smallmatrix}
0 & 5 \\
1 & 4
\end{smallmatrix}\right), \left(\begin{smallmatrix}
0 & 5 \\
3 & 2
\end{smallmatrix}\right), \left(\begin{smallmatrix}
0 & 5 \\
5 & 0
\end{smallmatrix}\right), \left(\begin{smallmatrix}
1 & 4 \\
2 & 3
\end{smallmatrix}\right), \left(\begin{smallmatrix}
1 & 4 \\
4 & 1
\end{smallmatrix}\right), \left(\begin{smallmatrix}
2 & 3 \\
1 & 4
\end{smallmatrix}\right), \left(\begin{smallmatrix}
2 & 3 \\
3 & 2
\end{smallmatrix}\right), \left(\begin{smallmatrix}
3 & 2 \\
2 & 3
\end{smallmatrix}\right), \left(\begin{smallmatrix}
4 & 1 \\
1 & 4
\end{smallmatrix}\right)$
\item 3 colours
$\left(\begin{smallmatrix}
0 & 1 & 4 \\
1 & 0 & 4 \\
1 & 1 & 3
\end{smallmatrix}\right), \left(\begin{smallmatrix}
0 & 1 & 4 \\
1 & 0 & 4 \\
2 & 2 & 1
\end{smallmatrix}\right), \left(\begin{smallmatrix}
0 & 2 & 3 \\
1 & 1 & 3 \\
1 & 2 & 2
\end{smallmatrix}\right), \left(\begin{smallmatrix}
0 & 2 & 3 \\
2 & 1 & 2 \\
3 & 2 & 0
\end{smallmatrix}\right), \left(\begin{smallmatrix}
0 & 3 & 2 \\
3 & 0 & 2 \\
1 & 1 & 3
\end{smallmatrix}\right), \left(\begin{smallmatrix}
0 & 5 & 0 \\
1 & 0 & 4 \\
0 & 2 & 3
\end{smallmatrix}\right), \left(\begin{smallmatrix}
1 & 0 & 4 \\
0 & 1 & 4 \\
1 & 1 & 3
\end{smallmatrix}\right), \left(\begin{smallmatrix}
1 & 0 & 4 \\
0 & 1 & 4 \\
1 & 3 & 1
\end{smallmatrix}\right),$ $\left(\begin{smallmatrix}
1 & 0 & 4 \\
0 & 1 & 4 \\
2 & 2 & 1
\end{smallmatrix}\right), \left(\begin{smallmatrix}
1 & 2 & 2 \\
2 & 1 & 2 \\
1 & 1 & 3
\end{smallmatrix}\right), \left(\begin{smallmatrix}
1 & 2 & 2 \\
2 & 1 & 2 \\
2 & 2 & 1
\end{smallmatrix}\right), \left(\begin{smallmatrix}
1 & 2 & 2 \\
2 & 2 & 1 \\
2 & 1 & 2
\end{smallmatrix}\right), \left(\begin{smallmatrix}
1 & 2 & 2 \\
2 & 3 & 0 \\
2 & 0 & 3
\end{smallmatrix}\right), \left(\begin{smallmatrix}
2 & 1 & 2 \\
1 & 2 & 2 \\
1 & 1 & 3
\end{smallmatrix}\right), \left(\begin{smallmatrix}
3 & 0 & 2 \\
0 & 3 & 2 \\
1 & 1 & 3
\end{smallmatrix}\right), \left(\begin{smallmatrix}
3 & 2 & 0 \\
1 & 1 & 3 \\
0 & 2 & 3
\end{smallmatrix}\right)$
\item 4 colours: see Appendix B.
\end{enumerate}
In the following result we list all of the matrices
that  correspond to actual perfect colourings of the
4-cube with two and three colours. We leave to check
the existence of the 57 distinct 4-colourings to the reader
as a challenging exercise.
\begin{theorem} \label{thm:5cube}
All perfect 2-colourings of the 5-cube 
are the six ones corresponding to
\[ \left(\begin{smallmatrix}
0 & 5 \\
5 & 0
\end{smallmatrix}\right), \left(\begin{smallmatrix}
1 & 4 \\
4 & 1
\end{smallmatrix}\right), \left(\begin{smallmatrix}
2 & 3 \\
1 & 4
\end{smallmatrix}\right), \left(\begin{smallmatrix}
2 & 3 \\
3 & 2
\end{smallmatrix}\right), \left(\begin{smallmatrix}
3 & 2 \\
2 & 3
\end{smallmatrix}\right), \left(\begin{smallmatrix}
4 & 1 \\
1 & 4
\end{smallmatrix}\right). \]
All perfect 3-colourings of the 5-cube 
are the eight ones corresponding to
\[ \left(\begin{smallmatrix}
0 & 1 & 4 \\
1 & 0 & 4 \\
2 & 2 & 1
\end{smallmatrix}\right), \left(\begin{smallmatrix}
0 & 3 & 2 \\
3 & 0 & 2 \\
1 & 1 & 3
\end{smallmatrix}\right), 
\left(\begin{smallmatrix}
0 & 5 & 0 \\
1 & 0 & 4 \\
0 & 2 & 3
\end{smallmatrix}\right),
\left(\begin{smallmatrix}
1 & 0 & 4 \\
0 & 1 & 4 \\
1 & 3 & 1
\end{smallmatrix}\right),
\left(\begin{smallmatrix}
1 & 0 & 4 \\
0 & 1 & 4 \\
2 & 2 & 1
\end{smallmatrix}\right),
\left(\begin{smallmatrix}
1 & 2 & 2 \\
2 & 1 & 2 \\
1 & 1 & 3
\end{smallmatrix}\right), \left(\begin{smallmatrix}
2 & 1 & 2 \\
1 & 2 & 2 \\
1 & 1 & 3
\end{smallmatrix}\right), \left(\begin{smallmatrix}
3 & 0 & 2 \\
0 & 3 & 2 \\
1 & 1 & 3
\end{smallmatrix}\right).
\]
\end{theorem}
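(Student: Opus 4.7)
My plan is to follow the template of the proof of Theorem \ref{thm:4cube}: realize every matrix listed in the statement by an explicit colouring, and eliminate each remaining candidate by a short structural argument.

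\textbf{Existence.} For each of the six listed 2-colour matrices and eight listed 3-colour matrices I would supply an explicit perfect colouring of the 5-cube, collected into figures analogous to Figure \ref{fig:4cube}. Most of the 2-colour matrices arise from cosets of a hyperplane of $\mathbb{F}_2^5$: the parity bipartition realises $\left(\begin{smallmatrix} 0 & 5 \\ 5 & 0 \end{smallmatrix}\right)$; $x_5 = 0$ realises $\left(\begin{smallmatrix} 4 & 1 \\ 1 & 4 \end{smallmatrix}\right)$; $x_1 + x_2 = 0$ realises $\left(\begin{smallmatrix} 3 & 2 \\ 2 & 3 \end{smallmatrix}\right)$; $x_1 + x_2 + x_3 = 0$ realises $\left(\begin{smallmatrix} 2 & 3 \\ 3 & 2 \end{smallmatrix}\right)$; and $x_1 + x_2 + x_3 + x_5 = 0$ realises $\left(\begin{smallmatrix} 1 & 4 \\ 4 & 1 \end{smallmatrix}\right)$. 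The remaining case $\left(\begin{smallmatrix} 2 & 3 \\ 1 & 4 \end{smallmatrix}\right)$ (with $|W| = 8$) and the eight 3-colour matrices need more bespoke pictures, but once supplied each is a routine vertex-by-vertex check.

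\textbf{Exclusion by divisibility and merging.} For any perfect 2-colouring the class sizes satisfy $n_1 a_{12} = n_2 a_{21}$ and $n_1 + n_2 = 32$. This instantly excludes $\left(\begin{smallmatrix} 0 & 5 \\ 1 & 4 \end{smallmatrix}\right)$ (forcing $6 n_1 = 32$) and $\left(\begin{smallmatrix} 1 & 4 \\ 2 & 3 \end{smallmatrix}\right)$ (forcing $3 n_1 = 32$). For each of the eight excluded 3-colour candidates I would pick a pair of colours $i, j$ that can be merged consistently --- that is, $a_{ik} = a_{jk}$ for the third colour $k$ and $a_{ii} + a_{ij} = a_{jj} + a_{ji}$ --- and observe that the resulting $2 \times 2$ matrix is, up to swapping the two colours, equal to $\left(\begin{smallmatrix} 1 & 4 \\ 2 & 3 \end{smallmatrix}\right)$. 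Since merging preserves perfect colourings, and the target matrix is already excluded by divisibility, the 3-colour candidate cannot correspond to a perfect colouring of $Q_5$ either. This is the same reduction used at the end of the proof of Theorem \ref{thm:4cube}.

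\textbf{The one genuine obstacle.} The only candidate not handled by the above is the 2-colour matrix $\left(\begin{smallmatrix} 0 & 5 \\ 3 & 2 \end{smallmatrix}\right)$, whose sizes $n_1 = 12$, $n_2 = 20$ pass divisibility. Here I would use a spectral argument. If such a colouring existed, the characteristic vector $f$ of the white class would satisfy $(M + 3 I) f = 3 \mathbf{1}$, forcing $f = \tfrac{3}{8} \mathbf{1} + u$ with $u$ in the $5$-dimensional $(-3)$-eigenspace of $Q_5$. That eigenspace is spanned by the Walsh characters of degree $4$, so one may write $u(x) = (-1)^{|x|} v(x)$ for some affine linear $v \colon \mathbb{F}_2^5 \to \mathbb{R}$. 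The constraint $u(x) \in \{-3/8,\, 5/8\}$ then pins every coordinate slope $b_i$ of $v$ into $\{3/4,\, -1/4,\, -5/4\}$ while forcing every pairwise sum $b_i + b_j$ into $\{-1, 0, 1\}$; a short case check shows no assignment satisfies both, so $u = 0$ and $f$ is constant, contradicting $|W| = 12$. A more elementary alternative, in the spirit of the $\left(\begin{smallmatrix} 0 & 4 \\ 2 & 2 \end{smallmatrix}\right)$ diagram in the proof of Theorem \ref{thm:4cube}, would propagate a local colouring through a picture of $Q_5$ by hand; this is plausible but much bulkier in dimension $5$, so I expect the Fourier route to be the cleanest.
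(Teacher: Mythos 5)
Your exclusion half is complete and correct, but your existence half has a genuine gap. For the eight listed $3$-colour matrices (and for the $2$-colour matrix $\left(\begin{smallmatrix} 2 & 3 \\ 1 & 4 \end{smallmatrix}\right)$) you only promise ``bespoke pictures'' to be supplied later; no construction is actually given, and finding a perfect $3$-colouring of a $32$-vertex $5$-regular graph realizing, say, $\left(\begin{smallmatrix} 0 & 5 & 0 \\ 1 & 0 & 4 \\ 0 & 2 & 3 \end{smallmatrix}\right)$ is not something one can wave at --- the ``routine vertex-by-vertex check'' only starts once a candidate colouring is in hand. The paper's key device here, which your proposal is missing, is the lifting construction: writing the $(d+1)$-cube as two copies of the $d$-cube joined by a perfect matching turns any perfect colouring of the $d$-cube with matrix $A$ into one of the $(d+1)$-cube with matrix $A+I$ (and, using two copies with two colours of equal size swapped, with matrix $A+P$ for the corresponding transposition-plus-fixed-point permutation matrix $P$). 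Five of the eight $3$-colour matrices are of the form $A+I$ and two of the form $A+P$ for matrices $A$ already realized on the $4$-cube in Theorem \ref{thm:4cube}, so only one genuinely new figure is needed. Your coset-of-a-hyperplane constructions for five of the six $2$-colourings are correct and verifiable, and $\left(\begin{smallmatrix} 2 & 3 \\ 1 & 4 \end{smallmatrix}\right)$ also has a closed form (e.g.\ $W=\{x: x_1=x_2=x_3\}$, or the lift of $\left(\begin{smallmatrix} 1 & 3 \\ 1 & 3 \end{smallmatrix}\right)$), but as written that case and all eight $3$-colour cases are unproved.

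On the exclusion side your argument is sound and in one place takes a genuinely different route from the paper. The divisibility test and the colour-merging reduction are exactly the paper's arguments, and I checked that each of the eight rejected $3$-colour candidates does admit a compatible merge landing on $\left(\begin{smallmatrix} 1 & 4 \\ 2 & 3 \end{smallmatrix}\right)$ up to a colour swap (the paper sends two of them to $\left(\begin{smallmatrix} 0 & 5 \\ 1 & 4 \end{smallmatrix}\right)$ instead; both work). For $\left(\begin{smallmatrix} 0 & 5 \\ 3 & 2 \end{smallmatrix}\right)$ the paper propagates colours through an explicit drawing of $Q_5$, whereas your Fourier argument is correct and cleaner: $(M+3I)f=3\mathbf{1}$ forces $f=\tfrac38\mathbf{1}+u$ with $u$ supported on the degree-$4$ Walsh characters, and since every pairwise sum $b_i+b_j$ of the resulting slopes is a half-integer while the constraints require it to be an integer, no such $f$ exists (note the conclusion is that \emph{no} admissible $u$ exists, not that $u=0$; the constant $\tfrac38$ is itself not an admissible value, so the contradiction is immediate). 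This spectral route avoids the paper's reliance on a symmetry-based ``without loss of generality'' in the diagram chase and would be a worthwhile substitute --- but the theorem is not proved until the nine missing colourings are exhibited.
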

\begin{proof}
\textbf{Two colours:}
It can be fun to find the particular perfect colourings by
hand for each case. But the following argument proves the existence 
in a simpler manner.

Consider two copies of a perfect colouring of a 3-cube and draw 
an edge between corresponding vertices. Here is an example
for the perfect 2-colouring with colour adjacency matrix 
$\left(\begin{smallmatrix} 0 & 3 \\ 1 & 2
\end{smallmatrix}\right)$. 
\[ \includegraphics[width=.3\textwidth]{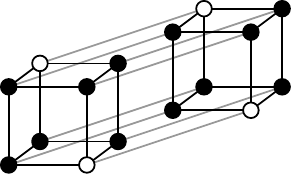} \]
The result is a perfect 2-colouring of the 4-cube with
colour adjacency matrix $\left(\begin{smallmatrix} 1 & 3 \\ 1 & 3
\end{smallmatrix}\right)$. In a similar manner, each
perfect $m$-colouring of a $d$-cube with  colour adjacency matrix 
$A$ yields a $m$-perfect colouring of a $(d+1)$-cube with colour 
adjacency matrix $A+I$, where $I$ is the identity matrix.

In the same way each of the five perfect 2-colourings of the
4-cube gives rise to one perfect 2-colouring of the 5-cube. 
The only remaining case is the matrix $\left(\begin{smallmatrix}
0 & 5 \\ 5 & 0 \end{smallmatrix}\right)$. The corresponding 
perfect 2-colouring comes from the fact that the $d$-cube is a 
bipartite graph for any $d$.

It remains to show that the three other matrices in the list
of candidates above do not yield perfect colourings. In order to 
exclude $\left(\begin{smallmatrix} 0 & 5 \\ 1 & 4
\end{smallmatrix}\right)$ and $\left(\begin{smallmatrix}
1 & 4 \\ 2 & 3 \end{smallmatrix}\right)$ we apply a result from
\cite{DF}: 
\begin{lemma}\label{lem:vercount2}
Let $A= (a_{ij}) \in \N^{2 \times 2}$ be a colour adjacency matrix of 
some connected graph $G=(V,E)$. Then $a_{12}$ and $a_{21}$ are both 
nonzero, and if $v_{i}$ denotes the number of vertices of colour $i$, then
\[ v_{1} = \frac{|V|}{1+\frac{a_{12}}{a_{21}}}, 
v_{2} = \frac{|V|}{\frac{a_{21}}{a_{12}}+1}.  \]
\end{lemma}
Applied to the two matrices above for the particular case
of the 5-cube, where $|V|=32$, we obtain $v_1=\frac{16}{3}$
for $\left(\begin{smallmatrix} 0 & 5 \\ 1 & 4
\end{smallmatrix}\right)$ and $v_1=\frac{32}{3}$ for
$\left(\begin{smallmatrix} 1 & 4 \\ 3 & 2
\end{smallmatrix}\right)$. Since these values are not integers
there is no corresponding perfect colouring of the 5-cube.

For the matrix $\left(\begin{smallmatrix} 0 & 5 \\ 3 & 2
\end{smallmatrix}\right)$ the lemma yields $v_1=12$ and 
$v_2=20$. This does not help to exclude this case.
But it can be excluded as follows:

Without loss of generality let the vertex labelled 1 in the following 
diagram be white.
\[ \includegraphics[width=.65\textwidth]{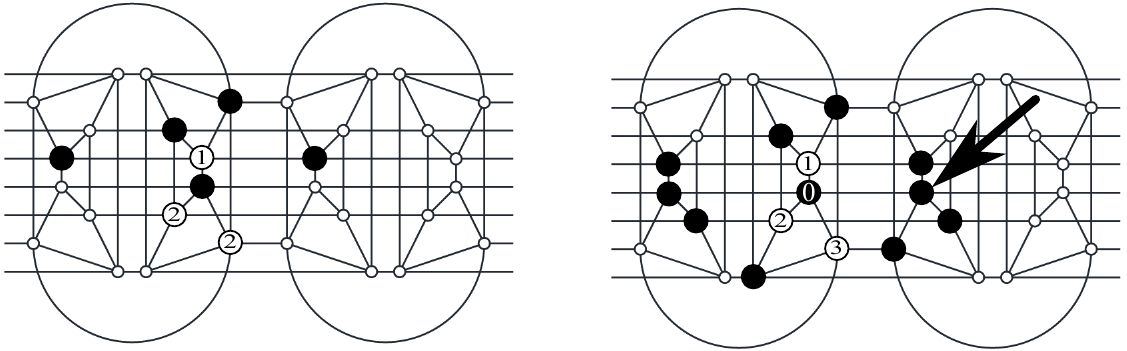} \]
This is indeed the 5-cube graph, see \cite{P}. (The edges that
leave to the left are connected to those that enter on the right 
on the same height. You may imagine the image wrapped on a 
cylinder, or you may draw the missing connections.)
Because of the matrix all five neighbours of 1 have to be black.
Without loss of generality, let the two vertices 2 and 3 be the 
other two white neighbours of the black vertex labelled 0. (Because of
the high symmetry of the 5-cube it does not matter which two are 
chosen.) All neighbours of vertices 2 and 3 are black. The other
two neighbours of 0 are black as well. Now the black vertex labelled 
by an arrow has four black neighbours, contradiction.

\textbf{Three colours:} For the five matrices $\left(\begin{smallmatrix}
1 & 0 & 4 \\
0 & 1 & 4 \\
1 & 3 & 1
\end{smallmatrix}\right),
\left(\begin{smallmatrix}
1 & 0 & 4 \\
0 & 1 & 4 \\
2 & 2 & 1
\end{smallmatrix}\right),
\left(\begin{smallmatrix}
1 & 2 & 2 \\
2 & 1 & 2 \\
1 & 1 & 3
\end{smallmatrix}\right), \left(\begin{smallmatrix}
2 & 1 & 2 \\
1 & 2 & 2 \\
1 & 1 & 3
\end{smallmatrix}\right),$ and $\left(\begin{smallmatrix}
3 & 0 & 2 \\
0 & 3 & 2 \\
1 & 1 & 3
\end{smallmatrix}\right)$, the observation above works:
Each of the five perfect 3-colourings of a 4-cube with colour 
adjacency matrix $A$ yields a perfect 3-colouring of a 5-cube 
with colour adjacency matrix $A+I$.

For two of the remaining matrices, $\left(\begin{smallmatrix}
0 & 1 & 4 \\
1 & 0 & 4 \\
2 & 2 & 1
\end{smallmatrix}\right)$ and $\left(\begin{smallmatrix}
0 & 3 & 2 \\
3 & 0 & 2 \\
1 & 1 & 3
\end{smallmatrix}\right)$, a variation of the same argument
works. Since 
\[ \left(\begin{smallmatrix}
0 & 1 & 4 \\
1 & 0 & 4 \\
2 & 2 & 1
\end{smallmatrix}\right) = \left(\begin{smallmatrix}
0 & 0 & 4 \\
0 & 0 & 4 \\
2 & 2 & 0
\end{smallmatrix}\right) + \left(\begin{smallmatrix}
0 & 1 & 0 \\
1 & 0 & 0 \\
0 & 0 & 1
\end{smallmatrix}\right),  \quad \mbox{ and }  \left(\begin{smallmatrix}
0 & 3 & 2 \\
3 & 0 & 2 \\
1 & 1 & 3
\end{smallmatrix}\right) = \left(\begin{smallmatrix}
0 & 2 & 2 \\
2 & 0 & 2 \\
2 & 1 & 1
\end{smallmatrix}\right) + \left(\begin{smallmatrix}
0 & 1 & 0 \\
1 & 0 & 0 \\
0 & 0 & 1
\end{smallmatrix}\right), \]
two copies of the corresponding perfect 3-colourings of the 4-cube yield 
perfect 3-colourings of the 5-cube. Here we need to use two slightly different
copies: in one colouring the roles of black and white are swapped.
So each white vertex gains a new black neighbour and vice versa.
(This trick works whenever the number of black vertices equals the 
number white of vertices, and when they are distributed in the same 
manner on the cube. Both conditions are fulfilled here, compare
Figure \ref{fig:4cube} bottom row, second and third from left.)

The last matrix $\left(\begin{smallmatrix}
0 & 5 & 0 \\
1 & 0 & 4 \\
0 & 2 & 3
\end{smallmatrix}\right)$ corresponds to the perfect 3-colouring shown 
in Figure \ref{fig:050104023}.

\begin{figure}
\[ \includegraphics[width=.4\textwidth]{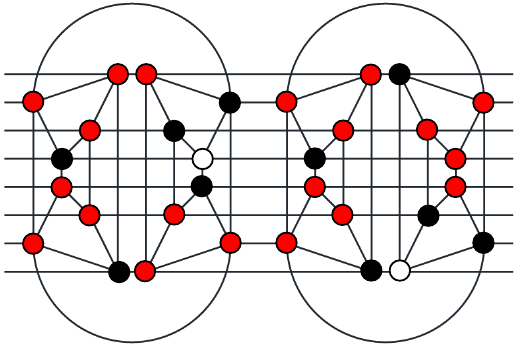} \]
\caption{A perfect 3-colouring of the 5-cube with colour adjacency
matrix $\left(\begin{smallmatrix}
0 & 5 & 0 \\
1 & 0 & 4 \\
0 & 2 & 3
\end{smallmatrix}\right)$. This is indeed the graph of the 5-cube 
if we imagine the image wrapped on a cylinder, so edges that
leave to the left are connected to those that enter on the right 
on the same height.  \label{fig:050104023}}
\end{figure}

The remaining matrices that do not correspond to perfect colourings 
of the 5-cube can all be excluded by the last argument from
the proof of Theorem \ref{thm:4cube}. For instance, let us assume
there is a perfect colouring corresponding to either one of
$ \left(\begin{smallmatrix}
0 & 1 & 4 \\
1 & 0 & 4 \\
1 & 1 & 3
\end{smallmatrix}\right)$ and  $\left(\begin{smallmatrix}
0 & 2 & 3 \\
1 & 1 & 3 \\
1 & 2 & 2
\end{smallmatrix}\right)$. By identifying colours 2 and 3 we obtain
a perfect 2-colouring with colour adjacency matrix 
$\left(\begin{smallmatrix} 0 & 5 \\ 1 & 4
\end{smallmatrix}\right)$. But we saw already that such a perfect 
2-colouring does not exist.

All remaining matrices can be excluded in this way, reducing them
to an impossible perfect 2-colouring with matrix 
$\left(\begin{smallmatrix} 1 & 4 \\ 2 & 3
\end{smallmatrix}\right)$ (or $\left(\begin{smallmatrix} 3 & 2 \\ 4 & 1
\end{smallmatrix}\right)$, which is the same up to swapping colours).
In most remaining cases this is pretty obvious. In the case
$\left(\begin{smallmatrix}
3 & 2 & 0 \\
1 & 1 & 3 \\
0 & 2 & 3
\end{smallmatrix}\right)$ we need to identify colours 1 and 3 in order 
to obtain $\left(\begin{smallmatrix} 3 & 2 \\ 4 & 1
\end{smallmatrix}\right)$.
\end{proof}

\section*{Acknowledgments}
The author thanks the Research Center of Mathematical
Modeling (RCM$^2$) at Bielefeld University for ongoing financial support.
Special thanks to Caya Schubert for providing the perfect colourings of
the 4-cube.
\bibliographystyle{amsplain}

\section*{Appendix: possible colour adjacency matrices 
for perfect 4-colourings of the 5-cube}
All possible colour adjacency matrices $A$ for perfect 4-colourings of the edge
graph of the 5-dimensional cube. It is not known which correspond to actual
perfect colourings. In principle all of them can be studied by the methods
of this paper.

\parindent0em

\medskip

$\left(\begin{smallmatrix}
0 & 0 & 0 & 5 \\
0 & 0 & 3 & 2 \\
0 & 3 & 0 & 2 \\
1 & 2 & 2 & 0
\end{smallmatrix}\right), \left(\begin{smallmatrix}
0 & 0 & 0 & 5 \\
0 & 0 & 3 & 2 \\
0 & 3 & 2 & 0 \\
1 & 2 & 0 & 2
\end{smallmatrix}\right), \left(\begin{smallmatrix}
0 & 0 & 0 & 5 \\
0 & 0 & 5 & 0 \\
0 & 1 & 0 & 4 \\
1 & 0 & 4 & 0
\end{smallmatrix}\right), \left(\begin{smallmatrix}
0 & 0 & 0 & 5 \\
0 & 0 & 5 & 0 \\
0 & 3 & 0 & 2 \\
3 & 0 & 2 & 0
\end{smallmatrix}\right), \left(\begin{smallmatrix}
0 & 0 & 0 & 5 \\
0 & 1 & 2 & 2 \\
0 & 2 & 1 & 2 \\
1 & 2 & 2 & 0
\end{smallmatrix}\right), \left(\begin{smallmatrix}
0 & 0 & 0 & 5 \\
0 & 2 & 1 & 2 \\
0 & 1 & 2 & 2 \\
1 & 2 & 2 & 0
\end{smallmatrix}\right), \left(\begin{smallmatrix}
0 & 0 & 0 & 5 \\
0 & 3 & 0 & 2 \\
0 & 0 & 3 & 2 \\
1 & 2 & 2 & 0
\end{smallmatrix}\right), \left(\begin{smallmatrix}
0 & 0 & 1 & 4 \\
0 & 0 & 4 & 1 \\
1 & 4 & 0 & 0 \\
4 & 1 & 0 & 0
\end{smallmatrix}\right)$, $\left(\begin{smallmatrix}
0 & 0 & 2 & 3 \\
0 & 0 & 3 & 2 \\
2 & 3 & 0 & 0 \\
3 & 2 & 0 & 0
\end{smallmatrix}\right), \left(\begin{smallmatrix}
0 & 0 & 5 & 0 \\
0 & 0 & 2 & 3 \\
1 & 1 & 0 & 3 \\
0 & 1 & 2 & 2
\end{smallmatrix}\right), \left(\begin{smallmatrix}
0 & 0 & 5 & 0 \\
0 & 3 & 2 & 0 \\
1 & 1 & 0 & 3 \\
0 & 0 & 2 & 3
\end{smallmatrix}\right), \left(\begin{smallmatrix}
0 & 1 & 0 & 4 \\
1 & 0 & 0 & 4 \\
0 & 0 & 1 & 4 \\
1 & 1 & 2 & 1
\end{smallmatrix}\right), \left(\begin{smallmatrix}
0 & 1 & 0 & 4 \\
1 & 0 & 4 & 0 \\
0 & 2 & 0 & 3 \\
2 & 0 & 3 & 0
\end{smallmatrix}\right), \left(\begin{smallmatrix}
0 & 1 & 0 & 4 \\
1 & 0 & 4 & 0 \\
0 & 2 & 2 & 1 \\
2 & 0 & 1 & 2
\end{smallmatrix}\right), \left(\begin{smallmatrix}
0 & 1 & 1 & 3 \\
1 & 0 & 1 & 3 \\
1 & 1 & 0 & 3 \\
1 & 1 & 1 & 2
\end{smallmatrix}\right), \left(\begin{smallmatrix}
0 & 1 & 1 & 3 \\
1 & 0 & 1 & 3 \\
1 & 1 & 3 & 0 \\
1 & 1 & 0 & 3
\end{smallmatrix}\right)$, $\left(\begin{smallmatrix}
0 & 1 & 1 & 3 \\
1 & 0 & 3 & 1 \\
1 & 3 & 0 & 1 \\
3 & 1 & 1 & 0
\end{smallmatrix}\right), \left(\begin{smallmatrix}
0 & 1 & 1 & 3 \\
1 & 1 & 0 & 3 \\
1 & 0 & 1 & 3 \\
1 & 1 & 1 & 2
\end{smallmatrix}\right), \left(\begin{smallmatrix}
0 & 1 & 1 & 3 \\
1 & 1 & 2 & 1 \\
1 & 2 & 1 & 1 \\
3 & 1 & 1 & 0
\end{smallmatrix}\right), \left(\begin{smallmatrix}
0 & 1 & 1 & 3 \\
1 & 2 & 1 & 1 \\
1 & 1 & 2 & 1 \\
3 & 1 & 1 & 0
\end{smallmatrix}\right), \left(\begin{smallmatrix}
0 & 1 & 1 & 3 \\
1 & 3 & 0 & 1 \\
1 & 0 & 3 & 1 \\
3 & 1 & 1 & 0
\end{smallmatrix}\right), \left(\begin{smallmatrix}
0 & 1 & 2 & 2 \\
1 & 0 & 2 & 2 \\
1 & 1 & 0 & 3 \\
1 & 1 & 3 & 0
\end{smallmatrix}\right), \left(\begin{smallmatrix}
0 & 1 & 2 & 2 \\
1 & 0 & 2 & 2 \\
1 & 1 & 1 & 2 \\
1 & 1 & 2 & 1
\end{smallmatrix}\right), \left(\begin{smallmatrix}
0 & 1 & 2 & 2 \\
1 & 0 & 2 & 2 \\
1 & 1 & 2 & 1 \\
1 & 1 & 1 & 2
\end{smallmatrix}\right)$, $\left(\begin{smallmatrix}
0 & 1 & 2 & 2 \\
1 & 0 & 2 & 2 \\
1 & 1 & 3 & 0 \\
1 & 1 & 0 & 3
\end{smallmatrix}\right), \left(\begin{smallmatrix}
0 & 1 & 2 & 2 \\
1 & 0 & 2 & 2 \\
2 & 2 & 0 & 1 \\
2 & 2 & 1 & 0
\end{smallmatrix}\right), \left(\begin{smallmatrix}
0 & 1 & 2 & 2 \\
1 & 0 & 2 & 2 \\
2 & 2 & 1 & 0 \\
2 & 2 & 0 & 1
\end{smallmatrix}\right), \left(\begin{smallmatrix}
0 & 2 & 0 & 3 \\
2 & 0 & 3 & 0 \\
0 & 1 & 0 & 4 \\
1 & 0 & 4 & 0
\end{smallmatrix}\right), \left(\begin{smallmatrix}
0 & 2 & 0 & 3 \\
2 & 0 & 3 & 0 \\
0 & 1 & 2 & 2 \\
1 & 0 & 2 & 2
\end{smallmatrix}\right), \left(\begin{smallmatrix}
0 & 3 & 0 & 2 \\
3 & 0 & 0 & 2 \\
0 & 0 & 3 & 2 \\
1 & 1 & 2 & 1
\end{smallmatrix}\right), \left(\begin{smallmatrix}
1 & 0 & 0 & 4 \\
0 & 1 & 0 & 4 \\
0 & 0 & 1 & 4 \\
1 & 1 & 2 & 1
\end{smallmatrix}\right), \left(\begin{smallmatrix}
1 & 0 & 0 & 4 \\
0 & 1 & 4 & 0 \\
0 & 2 & 1 & 2 \\
2 & 0 & 2 & 1
\end{smallmatrix}\right)$, $\left(\begin{smallmatrix}
1 & 0 & 1 & 3 \\
0 & 1 & 1 & 3 \\
1 & 1 & 3 & 0 \\
1 & 1 & 0 & 3
\end{smallmatrix}\right), \left(\begin{smallmatrix}
1 & 0 & 1 & 3 \\
0 & 1 & 3 & 1 \\
1 & 3 & 1 & 0 \\
3 & 1 & 0 & 1
\end{smallmatrix}\right), \left(\begin{smallmatrix}
1 & 0 & 2 & 2 \\
0 & 1 & 2 & 2 \\
1 & 1 & 0 & 3 \\
1 & 1 & 3 & 0
\end{smallmatrix}\right), \left(\begin{smallmatrix}
1 & 0 & 2 & 2 \\
0 & 1 & 2 & 2 \\
1 & 1 & 1 & 2 \\
1 & 1 & 2 & 1
\end{smallmatrix}\right), \left(\begin{smallmatrix}
1 & 0 & 2 & 2 \\
0 & 1 & 2 & 2 \\
1 & 1 & 2 & 1 \\
1 & 1 & 1 & 2
\end{smallmatrix}\right), \left(\begin{smallmatrix}
1 & 0 & 2 & 2 \\
0 & 1 & 2 & 2 \\
1 & 1 & 3 & 0 \\
1 & 1 & 0 & 3
\end{smallmatrix}\right), \left(\begin{smallmatrix}
1 & 0 & 2 & 2 \\
0 & 1 & 2 & 2 \\
2 & 2 & 1 & 0 \\
2 & 2 & 0 & 1
\end{smallmatrix}\right), \left(\begin{smallmatrix}
1 & 1 & 0 & 3 \\
1 & 1 & 3 & 0 \\
0 & 1 & 1 & 3 \\
1 & 0 & 3 & 1
\end{smallmatrix}\right)$, $\left(\begin{smallmatrix}
1 & 1 & 0 & 3 \\
1 & 1 & 3 & 0 \\
0 & 1 & 3 & 1 \\
1 & 0 & 1 & 3
\end{smallmatrix}\right), \left(\begin{smallmatrix}
1 & 1 & 1 & 2 \\
1 & 1 & 2 & 1 \\
1 & 2 & 1 & 1 \\
2 & 1 & 1 & 1
\end{smallmatrix}\right), \left(\begin{smallmatrix}
1 & 1 & 1 & 2 \\
1 & 2 & 1 & 1 \\
1 & 1 & 2 & 1 \\
2 & 1 & 1 & 1
\end{smallmatrix}\right), \left(\begin{smallmatrix}
1 & 1 & 1 & 2 \\
1 & 3 & 0 & 1 \\
1 & 0 & 3 & 1 \\
2 & 1 & 1 & 1
\end{smallmatrix}\right), \left(\begin{smallmatrix}
1 & 2 & 0 & 2 \\
2 & 1 & 0 & 2 \\
0 & 0 & 3 & 2 \\
1 & 1 & 2 & 1
\end{smallmatrix}\right), \left(\begin{smallmatrix}
1 & 2 & 2 & 0 \\
1 & 0 & 1 & 3 \\
1 & 1 & 0 & 3 \\
0 & 2 & 2 & 1
\end{smallmatrix}\right), \left(\begin{smallmatrix}
1 & 2 & 2 & 0 \\
1 & 1 & 0 & 3 \\
1 & 0 & 1 & 3 \\
0 & 2 & 2 & 1
\end{smallmatrix}\right), \left(\begin{smallmatrix}
1 & 4 & 0 & 0 \\
2 & 0 & 0 & 3 \\
0 & 0 & 1 & 4 \\
0 & 1 & 2 & 2
\end{smallmatrix}\right)$, $\left(\begin{smallmatrix}
2 & 0 & 0 & 3 \\
0 & 2 & 3 & 0 \\
0 & 1 & 2 & 2 \\
1 & 0 & 2 & 2
\end{smallmatrix}\right), \left(\begin{smallmatrix}
2 & 0 & 1 & 2 \\
0 & 2 & 2 & 1 \\
1 & 2 & 2 & 0 \\
2 & 1 & 0 & 2
\end{smallmatrix}\right), \left(\begin{smallmatrix}
2 & 0 & 3 & 0 \\
0 & 2 & 0 & 3 \\
1 & 0 & 1 & 3 \\
0 & 1 & 1 & 3
\end{smallmatrix}\right), \left(\begin{smallmatrix}
2 & 1 & 0 & 2 \\
1 & 2 & 0 & 2 \\
0 & 0 & 3 & 2 \\
1 & 1 & 2 & 1
\end{smallmatrix}\right), \left(\begin{smallmatrix}
2 & 1 & 1 & 1 \\
1 & 2 & 1 & 1 \\
1 & 1 & 2 & 1 \\
1 & 1 & 1 & 2
\end{smallmatrix}\right), \left(\begin{smallmatrix}
2 & 1 & 1 & 1 \\
1 & 2 & 1 & 1 \\
1 & 1 & 3 & 0 \\
1 & 1 & 0 & 3
\end{smallmatrix}\right), \left(\begin{smallmatrix}
2 & 3 & 0 & 0 \\
1 & 0 & 0 & 4 \\
0 & 0 & 3 & 2 \\
0 & 2 & 2 & 1
\end{smallmatrix}\right), \left(\begin{smallmatrix}
3 & 0 & 0 & 2 \\
0 & 3 & 0 & 2 \\
0 & 0 & 3 & 2 \\
1 & 1 & 2 & 1
\end{smallmatrix}\right)$, $\left(\begin{smallmatrix}
3 & 0 & 1 & 1 \\
0 & 3 & 1 & 1 \\
1 & 1 & 3 & 0 \\
1 & 1 & 0 & 3
\end{smallmatrix}\right)$

\end{document}